\theoremstyle{plain}
\newtheorem{theorem}{Theorem}[section]
\newtheorem{corollary}[theorem]{Corollary}
\theoremstyle{definition}
\begin{document}

% define the title

\author{Mingliang Zhu}
\author{Antal Jo\'os}

\title{Packing $1.35\cdot 10^{11}$ rectangles into a unit square}

\email{lightzhu@csu.edu.cn}
\email{joosa@uniduna.hu}

%\date{\today}

% generates the title

\maketitle

% insert the table of contents
%\tableofcontents
%\newpage

\begin{abstract}
It is known that $\sum\limits_{i=1}^{\infty} \frac{1}{i (i+1)} = 1$. In 1968, Meir and Moser asked for finding the
smallest $\epsilon$ such that all the rectangles of sizes $1/i \times 1/(i + 1)$ for $i = 1, 2, \ldots$, can be packed into a unit square or a rectangle of area $1 + \epsilon$. In this paper, we show that we can pack the first $1.35\cdot10^{11}$ rectangles into the unit square and give an estimate for $\epsilon$ from this packing.
\end{abstract}

\section{Introduction}

Packing of rectangles means that the rectangles have mutually disjoint interiors.
In 1968, Meir and Moser \cite{meir} asked the following problem: since $\sum\limits_{i=1}^{\infty} \frac{1}{i (i+1)} = 1$, it is reasonable to ask whether the set of
rectangles of sizes  $1/i \times 1/(i + 1)$ for $i =1, 2, \ldots$, can be packed into a square or rectangle of area 1. 
Failing that, find the smallest $\epsilon$ such that the rectangles
can be packed into a rectangle of area $1+\epsilon$.

Meir and Moser proved that $\epsilon \le 0.0678$.
Jennings \cite{Jennings94}, \cite{Jennings95} presented packing such that $\epsilon \le 0.009877$.
Balint \cite{balint}, \cite{balint98}
proved that $\epsilon\le 0.004 004$. Paulhus \cite{paulhus} made a great progress and showed that $\epsilon\le 10^{-9}$. 
Jo\'os \cite{joos-0} pointed out that the proof of the lemma of Paulhus is not true and proved that $\epsilon\le 1.26 \cdot 10^{-9}$ for square container and $\epsilon\le 6.878 \cdot 10^{-10}$ for rectangular shape conatiner. Grzegorek and Januszewski \cite{gj} fixed the lemma of Paulhus.

Meir and Moser asked a similar question in \cite{meir}. Can we pack the squares of side lengths $1/2,1/3,\ldots$ into a rectangle of area $\sum\limits_{i=2}^{\infty}\frac{1}{i^2}=\pi^2/6-1$? 
Failing that, find the smallest $\epsilon$ such that the reciprocal squares
can be packed into a rectangle of area $\pi^2/6-1+\epsilon$.
The answer is not known yet, whether $\epsilon=0$ or $\epsilon>0$, but there are some results. In \cite{meir}, \cite{Jennings94}, \cite{Ball}, \cite{paulhus}, \cite{gj} can be found better and better packings of the squares. Chalcraft \cite{Chalcraft} generalized this question. He packed the
squares of side lengths $1,2^{-t},3^{-t},\ldots$ into a square of
area $\sum\limits_{i=1}^{\infty}i^{-2t}$. He proved that there is a packing of the
squares of side lengths $1,2^{-t},3^{-t},\ldots$ into a square of area $\sum\limits_{i=1}^{\infty}i^{-2t}$ for all $t$ in the range
$[0.5964,0.6]$.\\
W\"astlund \cite{Wastlund} proved if $1/2<t<2/3$, then the squares
of side lengths $1,2^{-t},3^{-t},\ldots$ can be packed into some
finite collection of square boxes of the area $\sum\limits_{i=1}^{\infty}i^{-2t}$.
Jo\'os \cite{joos-2} packed the squares $1,2^{-t},3^{-t},\ldots$ into a rectangle of area $\sum\limits_{i=1}^{\infty}{i^{-2t}}$ for $\log_32<t<2/3$.
Januszewski and Zielonka \cite{jz} extended this interval to $(1/2,2/3]$.
In \cite{joos-1} can be found
a $3$-dimensional generalization of the question of Chalcraft, i.e.
a packing of the $3$-dimensional cubes of edge lengths
$1,2^{-t},3^{-t},\ldots$ into a $3$-box of the right area for all
$t$ in the range $[0.36273,4/11]$. Januszewski and Zielonka \cite{jz} extended this interval to $(2/3,4/11]$. Jo\'os \cite{Joos20} generalized this problem and packed the $d$-cubes of side lengths $1,2^{-t},3^{-t},\ldots$ into a $d$-box of the right area for all $t$ on an interval.
Januszewski and Zielonka \cite{jz2} extended this interval to $(1/d,2^{d-1}/(d2^{d-1}-1)]$.

Tao \cite{tao} recently proved that any $1/2<t<1$, and any $i_0$ that is sufficiently large depending on $t$, the squares of side length $i^{-t}$ for $i\ge i_0$ can be packed into a square of area $\sum\limits_{i=i_0}^{\infty}{i^{-2t}}$.
Sono \cite{Sono} generalized the result of Tao and considered the squares of side length $f(i)^{-t}$. He proved that for any $1/2<t<1$, there exists a positive integer $i_0$ depending on $t$ such that for any $i\ge i_0$, squares of side length $f(i)^{-t}$ for $i\ge i_0$ can
be packed into a square of area $\sum\limits_{i=i_0}^{\infty}{f(i)^{-2t}}$ if the function $f$
satisfies some suitable conditions.
McClenagan \cite{McClenagan} proved that if $\frac{1}{d}<t<\frac{1}{d-1}$, and $i_0$ is sufficiently large depending on $t$, then the $d$-cubes of
side length $i^{-t}$ for $i\ge i_0$ can be perfectly packed into a $d$-cube of volume $\sum\limits_{i=i_0}^{\infty}{f(i)^{-dt}}$.

In the paper of Tao \cite{tao} one can read the following for any $1/2<t<1$. "We remark that the same argument (with minor notational changes) would also allow one to pack rectangles of dimensions $n^{-t}\times (n + 1)^{-t}$ for $n\ge n_0$ perfectly into a square of area $\sum\limits_{n=n_0}^{\infty}\frac{1}{n^{t}(n+1)^{t}}$; we leave the details of this modification to the interested reader."\\
The case $t=1$ is unsolved for packing squares of side lengths $1/2,1/3,\ldots$ into a square of rectangle of area
$\sum\limits_{i=2}^{\infty}{1/i^2}=\pi^2/6-1$ or for packing rectangles of dimensions $1\times1/2,1/2\times1/3,\ldots$ into a square of rectangle of area $\sum\limits_{i=1}^{\infty}{1/(i(i+1))}=1$.
We consider this last question.

\section{Result}

We follow the algorithm in \cite{joos-0} and write a Julia program to run out the results for different number of packed rectangles.
Our packing result for 1000 rectangles is in Fig. \ref{fig:1}

\begin{figure}[htbp]
\centering
\includegraphics[width=0.8\textwidth]{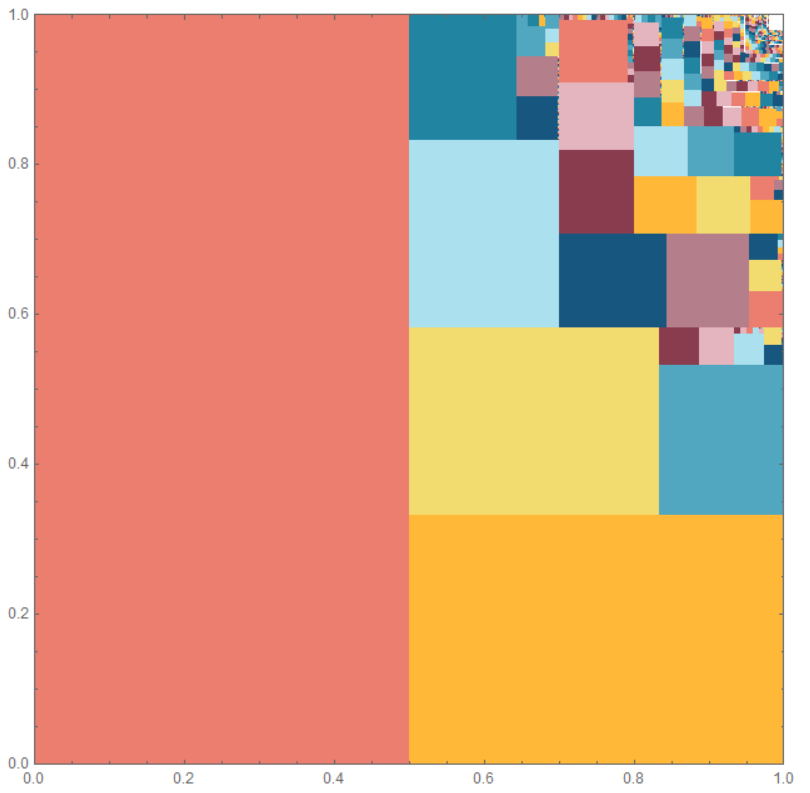}
\caption{Packing 1000 rectangles }
\label{fig:1}
\end{figure}

\subsection{Result from computer program}

We call rectangles that we pack and we call (empty) boxes which are the remaining (rectangular shape) empty spaces to avoid confusion.
By using computer program, we have packed the first $10^{11}$ rectangles into the unit square. The left largest empty box (let $E$ be this box) has a width and length as
$1.888\ 883\ 876\ 3176\ 668\cdot 10^{-6}\times 1.888\ 893\ 876\ 343\ 809\ 9\cdot 10^{-6}$.
From the sum of the area sequence to $10^{11}$, we can estimate the remaining area to be packed is $\frac{1}{10^{11} + 1} <  10^{-11}$. Observe, the side lengths of $E$ are less than $\sqrt{10^{-11}}= \sqrt{10} \cdot 10^{-6}$.

We are going to pack rectangles from $10^{11} + 1$ and on, into $E$. Since $E$ is close to a square, there are at least $188\ 888^2>3.5\cdot 10^{10}$ following rectangles can be packed into $E$. So we can pack at least $1.35 \cdot 10^{11}$ into the unit square.

\subsection{Ratio of largest rectangle to the total remaining area }

When we calculate the ratio of largest empty box to the total remaining area, we find a very interesting phenomenon: the ratio is near to $0.36$ when number of packed rectangles $n$ becomes larger. The values are listed in Table. \ref{table:1}.

\begin{table}[htb]
\begin{center}
\caption{Ratio table}
\label{table:1}
\begin{tabular}{|c|c|}
\hline   \textbf{number of rectangles} & \textbf{ratio} \\
\hline $ 10^3  $           & 0.4142 \\
\hline $ 10^4   $          & 0.3441 \\
\hline $ 10^5   $          & 0.3577 \\
\hline $ 10^6   $          & 0.3554 \\
\hline $ 10^7   $          & 0.3502 \\
\hline $ 10^8   $          & 0.3400 \\
\hline $ 10^9   $          & 0.3701 \\
\hline $ 2 \cdot 10^9  $  & 0.3648 \\
\hline $ 4 \cdot 10^9  $  & 0.3580 \\
\hline $ 5 \cdot 10^9  $  & 0.3613 \\
\hline $ 1 \cdot 10^{10}$ & 0.3687 \\
\hline $ 2 \cdot 10^{10}$ & 0.3677 \\
\hline $ 5 \cdot 10^{10}$ & 0.3631 \\
\hline $ 1 \cdot 10^{11}$ & 0.3568 \\
\hline
\end{tabular}
\end{center}
\end{table}

During the packing of the first $10^5$ rectangles into the unit square the ratios of largest empty box to the total remaining area can be seen in Fig. \ref{fig:1_5}.

\begin{figure}[htbp]
\centering
\includegraphics[width=0.8\textwidth]{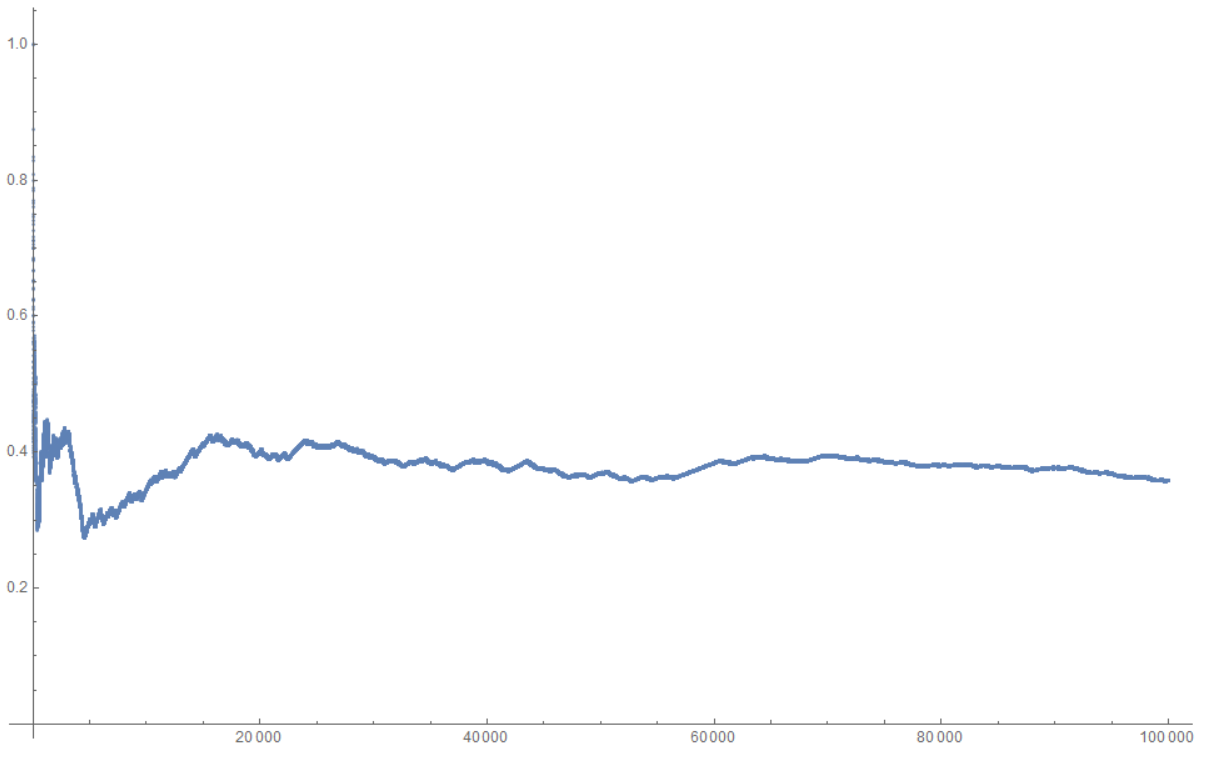}
\caption{The ratios of largest empty box to the total remaining area}
\label{fig:1_5}
\end{figure}

\subsection{Theorem for estimating the area}

It is easy to prove the following theorem, though the actual area is greater than the Balint's rectangle area $1 + 6 / (5n)$ in \cite{balint98}. We can use a small trick to decrease the total area of $1+\epsilon$.

\begin{theorem}
\label{}
If $n - 1$ ($n\ge1000$) rectangles can be packed into the unit square, then all rectangles $\sum\limits_{i=1}^{\infty} \frac{1}{i (i+1)}$ can be packed into the square of side length $1 + 1/n$. The actual packing area is less than $1 + 2/n \times (\ln 2 + 1 / 2n)$
\end{theorem}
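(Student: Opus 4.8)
The plan is to start from the hypothesis that the first $n-1$ rectangles (those of sizes $1/i\times 1/(i+1)$ for $i=1,\dots,n-1$) already fit inside the unit square, and to pack all remaining rectangles, i.e. those with index $i\ge n$, into a thin L-shaped region obtained by enlarging the unit square to a square of side $1+1/n$. Concretely, I would place the unit square in the corner of the big square; the complement is an L-shaped frame consisting of a vertical strip of dimensions $(1/n)\times(1+1/n)$ and a horizontal strip of dimensions $1\times(1/n)$ (or one can split it as two rectangles of sizes $(1/n)\times 1$ and $(1/n)\times(1+1/n)$ meeting at the corner). Since the $i$-th remaining rectangle has both side lengths at most $1/n\le 1/i$, each such rectangle fits (in the correct orientation) into a strip of width $1/n$. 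So it suffices to stack the rectangles $i=n,n+1,\dots$ along these width-$1/n$ strips, shelf-packing them by their longer side $1/i$.

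The key quantitative step is a length count: place rectangle $i$ so that its side of length $1/i$ runs along the length of the strip and its side of length $1/(i+1)<1/n$ runs across the width $1/n$. Then the total length consumed is $\sum_{i=n}^{\infty} 1/i$, which diverges, but we have total available strip length $1 + (1+1/n) + (\text{the horizontal piece}) $ — this does not obviously suffice, so instead I would do the count more carefully by using the free width as well. The right bookkeeping is the area bound: the total area of the remaining rectangles is $\sum_{i=n}^{\infty}\frac{1}{i(i+1)}=\frac1n$, and a shelf-packing argument inside a strip of width $w=1/n$ wastes at most one "row height" per shelf. I would instead pack into a single long rectangle of width $1/n$ and length $L$, cutting it (conceptually) from the L-shaped frame, and show by the standard shelf argument that a width-$1/n$ box of length $L=2(\ln 2 + 1/(2n))$ suffices; then observe $2(\ln2+1/(2n))\le 1+1/n$ for $n\ge 1000$, so the frame (whose two arms have combined length more than $2$) accommodates it, and in fact one reads off that the actually-used area is at most $1 + \frac{2}{n}\bigl(\ln 2 + \frac{1}{2n}\bigr)$.

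For the shelf estimate I would group the rectangles $i=n,n+1,\dots$ into blocks $B_k=\{\,i: 2^{k-1}n\le i<2^k n\,\}$ for $k=1,2,\dots$; on block $B_k$ every rectangle has height (shorter side) at most $1/(2^{k-1}n)$ and the widths (longer sides) sum to $\sum_{i\in B_k}1/i<\ln 2 + O(1/(2^{k-1}n))$, so block $B_k$ fits into a sub-strip of width $1/n$ and length a little more than $\ln 2$ — wait, this is the wrong split. Cleaner: pack rectangle $i$ with its long side $1/i$ vertical (spanning part of the height $1+1/n$) and its short side horizontal; stack them in columns of width summing to the short sides. The honest computation is: total width used $=\sum_{i\ge n}\frac1{i+1}$ diverges too, so the only workable accounting is the area one, giving the $\ln 2$ from $\sum_{i=n}^{2n-1}\frac1i\to\ln 2$ telescoped across dyadic blocks, plus the $1/(2n)$ Euler–Maclaurin correction from $\sum_{i=n}^{\infty}\frac1{i^2}\approx \frac1n+\frac1{2n^2}$. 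The main obstacle is precisely this: making the shelf/column packing rigorous so that the wasted space across all dyadic blocks telescopes to exactly the claimed $\frac{2}{n}(\ln 2 + \frac1{2n})$ rather than a weaker $O(1/n)$ bound; everything else (the containment of the L-frame in the side-$(1+1/n)$ square, the trivial size bounds $1/i\le 1/n$) is routine.
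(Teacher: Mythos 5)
There is a genuine gap: you never actually construct the packing of the tail rectangles, and you end by flagging the key step as an unresolved obstacle. In fact the idea you float and then discard (``wait, this is the wrong split'') is essentially the correct one, but you assemble it wrongly: you propose giving each dyadic block $B_k=\{i:2^{k-1}n\le i<2^kn\}$ its own sub-strip of width $1/n$ and length about $\ln 2$, which cannot work (infinitely many blocks, each consuming length about $\ln 2$, so the total strip length needed diverges), and your fallback to an ``area accounting / shelf-waste'' argument is a red herring --- no such accounting is needed, and the theorem does not claim the waste telescopes to anything; it only claims the container used has area less than $1+\frac2n\left(\ln 2+\frac1{2n}\right)$, which is immediate once an explicit container of that size is exhibited.

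The missing assembly is this: pack block $B_k$ as the $k$-th \emph{row} of a single box, each rectangle $P_i$ ($1/i\times 1/(i+1)$) lying with its long side horizontal, so row $k$ has length $\sum_{i=2^{k-1}n}^{2^kn-1}\frac1i<\ln 2+\frac1{2^kn}\le\ln 2+\frac1{2n}$ and height at most $\frac1{2^{k-1}n}$ (the largest short side in the block, rounded up to the allotted row height). Stacking the rows, the heights sum to $\frac1n+\frac1{2n}+\frac1{4n}+\cdots=\frac2n$, so the whole tail fits in a box of size $\frac2n\times\left(\ln 2+\frac1{2n}\right)$. Cutting this box along the horizontal line at distance $\frac1n$ (which separates row $1$ from the later rows, so no rectangle is severed) gives two boxes of size $\frac1n\times\left(\ln 2+\frac1{2n}\right)$; since $\ln 2+\frac1{2n}<1$ for $n\ge 1000$, these fit into the two arms of the L-shaped frame you describe inside the square of side $1+\frac1n$, and the used area is $1+\frac2n\left(\ln 2+\frac1{2n}\right)$ as claimed. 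Your proposal contains all the individual estimates (the dyadic blocks, the bound $\sum_{i\in B_k}1/i<\ln 2+O(2^{-k}/n)$, the height bound, the containment of the L-frame) but without this row-stacking construction the proof is not complete.
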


\begin{proof}[Proof]

Denote the rectangle of dimensions $\frac{1}{n} \times \frac{1}{n+1}$ by $P_n$.
Assume $P_1,\ldots,P_{n-1}$ are packed into the unit square.\\
Observe, $P_n$ can be packed into the square of side length $\frac{1}{n}$.
Pack rectangles from $n$ to infinity in this way of rows and columns into a large rectangle as Fig. \ref{fig:2} showing.
The rectangles from $P_{2^{i-1}n}$ to $P_{2^{i}n-1}$ lie in the $i$-th row for $i=1,2,\ldots$. The width of the smallest rectangular shape container is
$$\frac{1}{n} + \frac{1}{2n} + \frac{1}{4n} + \cdots  = \frac{2}{n}$$
and the length of the smallest rectangular shape container is
$$\frac{1}{n} + \frac{1}{n+1} + \frac{1}{n+2} + \cdots + \frac{1}{2n-1}.$$
\begin{figure}[htbp]
\centering
\includegraphics[width=0.8\textwidth]{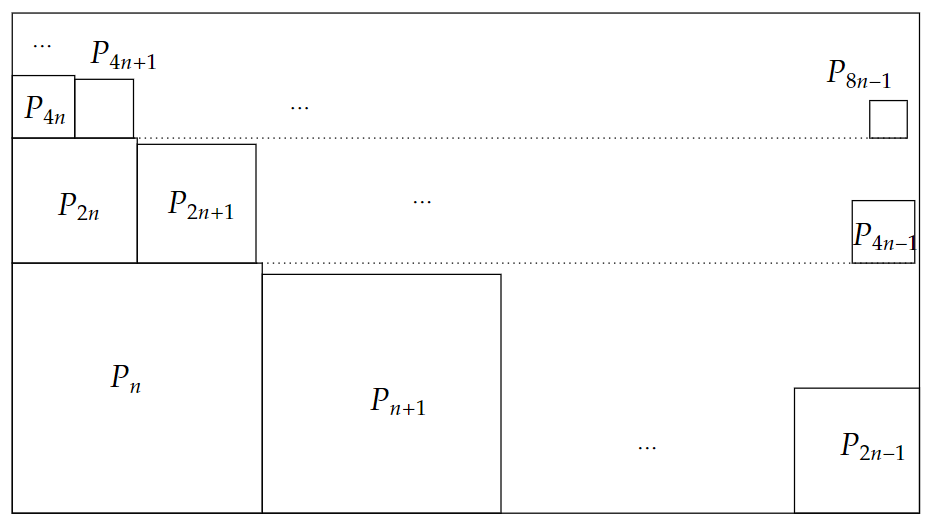}
\caption{packing 1000 rectangles }
\label{fig:2}
\end{figure}

Since $\frac{1}{n+1} + \frac{1}{n+2} + \cdots + \frac{1}{2n-1} + \frac{1}{2n}<\int_{n}^{2n}\frac{1}{x}\ dx=\ln 2$,
the length of the container:
$$
\frac{1}{n} + \frac{1}{n+1} + \frac{1}{n+2} + \cdots + \frac{1}{2n-1} + \frac{1}{2n} - \frac{1}{2n}$$
$$= \frac{1}{n} +\left( \frac{1}{n+1} + \frac{1}{n+2} + \cdots + \frac{1}{2n-1} + \frac{1}{2n}\right) - \frac{1}{2n}$$
$$< \frac{1}{n} + \ln2 - \frac{1}{2n} = \ln 2 +  \frac{1}{2n}.
$$
The length of the $i$-th row is $\ln 2+\frac{1}{2^{i-1}n}<\ln 2+\frac{1}{2n}$ for $i=1,2,\ldots$.
Since $n\ge 1000$, this length $\ln 2 + \frac{1}{2n}$ is less than $1$.

The rectangles from $n$ to infinity are put into a box of $\frac{2}{n} \times (\ln2 + \frac{1}{2n})$. Divide this box into two equal smaller boxes of dimensions $\frac{1}{n} \times (\ln2 + \frac{1}{2n})$ and glue to the unit square as in Fig.\ref{fig:3}.
\begin{figure}[htbp]
\centering
\includegraphics[width=0.5\textwidth]{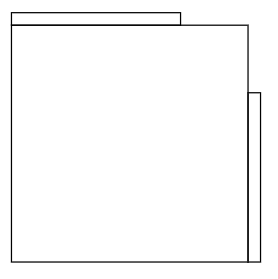}
\caption{final packing }
\label{fig:3}
\end{figure}

We proved that all rectangles can be packed into a square of side length $(1+1/n)$, and it is just a rough estimate, although in two stripes it is not efficient to pack them, and can be improved by some ways obviously like \cite{balint}.
\end{proof}

From the computer program result of subsection 2.1, we have

\begin{corollary}
  The rectangles of dimensions $1/n \times 1/(n + 1)$ for $n \geq 1$ can be packed into a square of side length $1 + \frac{1}{1.35 \cdot 10^{11} + 1 }$, which shows that $\epsilon <1.49\cdot 10^{-11}$.
\end{corollary}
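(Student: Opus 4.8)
The corollary follows almost immediately from Theorem~\ref{} once we supply the one fact coming from the computer run of subsection~2.1, namely that the first $1.35\cdot 10^{11}$ rectangles $P_1,\dots,P_{1.35\cdot 10^{11}}$ can be packed into the unit square. So the plan is: first set $n = 1.35\cdot 10^{11}+1$, note that $n-1 = 1.35\cdot 10^{11} \ge 1000$, and invoke Theorem~\ref{} with this value of $n$. The theorem then gives a packing of \emph{all} the rectangles $1/i\times 1/(i+1)$, $i\ge 1$, into a square of side length $1+1/n = 1 + \tfrac{1}{1.35\cdot 10^{11}+1}$, which is the first assertion.

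For the numerical bound on $\epsilon$, I would compare the area of this square with $1$. The container has area $\bigl(1+1/n\bigr)^2 = 1 + 2/n + 1/n^2$, so it suffices to observe that $2/n + 1/n^2 < 1.49\cdot 10^{-11}$ when $n = 1.35\cdot 10^{11}+1$. Indeed $2/n < 2/(1.35\cdot 10^{11}) = 1.481\ldots\cdot 10^{-11}$, and the $1/n^2$ term is of order $10^{-22}$, negligible, so the sum is below $1.49\cdot 10^{-11}$; hence $\epsilon < 1.49\cdot 10^{-11}$. (If one instead wants the sharper bound coming from the ``actual packing area'' clause of the theorem, one can use $1 + \tfrac{2}{n}\bigl(\ln 2 + \tfrac{1}{2n}\bigr) < 1 + \tfrac{2\ln 2}{1.35\cdot 10^{11}} + O(10^{-22}) < 1 + 1.03\cdot 10^{-11}$, which is even smaller; either estimate yields the stated conclusion.)

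There is essentially no obstacle here: the entire content is the computational claim that $1.35\cdot 10^{11}$ rectangles fit in the unit square, which is established in subsection~2.1 by running the program of \cite{joos-0} to pack $10^{11}$ rectangles and then filling the residual largest empty box $E$ (of side about $1.888\cdot 10^{-6}$, hence area $\approx 3.57\cdot 10^{-12}$, comfortably exceeding $1/(10^{11}+1)$) with at least $188\,888^2 > 3.5\cdot 10^{10}$ further nearly-square rectangles $1/i\times1/(i+1)$, whose sizes near index $10^{11}$ are about $10^{-11}$ and thus small enough to tile a $188\,888 \times 188\,888$ grid inside $E$. The only thing to be a little careful about is confirming that $E$ is close enough to a square that a square sub-grid of $188\,888^2$ cells of side $> 1/\sqrt{10^{-11}}\cdot$(appropriate scaling) actually fits — but the reported dimensions of $E$ differ only in the sixth significant digit, so this is immediate. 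Once the count $1.35\cdot 10^{11}$ is in hand, the corollary is just Theorem~\ref{} plus the elementary arithmetic above.
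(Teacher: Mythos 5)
Your proposal is correct and follows essentially the same route as the paper, which offers no separate proof beyond invoking the theorem with the computational result of subsection~2.1: take $n=1.35\cdot 10^{11}+1$ (so $n-1\ge 1000$ rectangles are packed in the unit square), obtain the container of side $1+1/n$, and check $2/n+1/n^2<1.49\cdot 10^{-11}$. One small slip in your parenthetical: the area of $E$, about $3.57\cdot 10^{-12}$, does \emph{not} exceed the remaining area $1/(10^{11}+1)\approx 10^{-11}$ --- nor does it need to, since the argument only requires that at least $188\,888^2>3.5\cdot 10^{10}$ of the subsequent rectangles (each of side length less than $10^{-11}$) fit inside $E$.
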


\section{Conclusion}

We hope to pack more rectangles into the unit square by computer program, though the computer memory is the constraint of improvement. A mathematical proof for this problem might be needed.

\end{document}